\newcommand{\excise}[1]{}
\newtheorem{thm}{Theorem}[section]
\newtheorem{lemma}[thm]{Lemma}
\newtheorem{prop}[thm]{Proposition}
\newtheorem{question}[thm]{Question}
\theoremstyle{definition}
\newtheorem{example}[thm]{Example}
\newtheorem{remark}[thm]{Remark}
\newtheorem{defn}[thm]{Definition}
\numberwithin{equation}{section}
\newcommand{\ring}[1]{\ensuremath{\mathbb{#1}}}
\newcommand\ZZ{\ring{Z}}
\begin{document}

\mbox{}
\title[On the periodicity of irreducible elements in ACMs]{On the periodicity of irreducible elements in arithmetical congruence monoids}

\author{Jacob Hartzer}
\address{Mathematics Department\\Texas A\&M University\\College Station, TX 77843}
\email{jmhartzer@tamu.edu}

\author{Christopher O'Neill}
\address{Mathematics Department\\Texas A\&M University\\College Station, TX 77843}
\email{coneill@math.tamu.edu}

\date{\today}

\begin{abstract}
Arithmetical congruence monoids, which arise in non-unique factorization theory, are multiplicative monoids $M_{a,b}$ consisting of all positive integers $n$ satsfying $n \equiv a \bmod b$.  In this paper, we examine the asymptotic behavior of the set of irreducible elements of $M_{a,b}$, and characterize in terms of $a$ and $b$ when this set forms an eventually periodic sequence.  
\end{abstract}

\maketitle


\section{Introduction}
\label{s:intro}


Fix positive integers $a$ and $b$, and consider the set $M_{a,b}$ of positive integers $n$ satisfying the equation $n \equiv a \bmod b$.  If $M_{a,b}$ is closed under multiplication, it is known as an \emph{arithmetical congruence monoid} (Definition~\ref{d:acm}).  Since their introduction~\cite{acmarithmetic}, much of the literature concerning arithmetical congruence monoids has centered around their factorization structure~\cite{acmdelta,acmarithmetic,acmgenelast}, that is, the different ways in which monoid elements can be expressed as products of irreducible elements.  Unlike the set $\ZZ_{\ge 1}$ of all positive integers, which admits unique factorization into primes, factorization of elements of $M_{a,b}$ need not be unique; see Example~\ref{e:hilbert}.  

The class of arithmetical congruence monoids encompasses a wide range of factorization structures.  Some are Krull monoids, which have particularly well-behaved factorization structure~ \cite{nonuniq}, while others are ill-behaved enough to have non-accepted elasticity~\cite{acmacceptelast}, a pathological factorization property found in few ``naturally occuring'' monoids.  Additionally, the factorization structure of arithmetical congruence monoids is strongly connected to prime factorization in the integers, one of the classical motivations of broader factorization theory.  For a thorough overview of the literature on arithmetical congruence monoids, see Baginski and Chapman's survey article~\cite{acmsurvey}.  

In this paper, we examine the distribution of irreducible elements in arithmetical congruence monoids.  Our main result (Theorem~\ref{t:maincharacterization}) gives a complete answer to Question~\ref{q:periodicity}, which appeared as \cite[Open Question~4.6]{acmsurvey} in the aforementioned survey article.  

\begin{question}\label{q:periodicity}
Is the set of irreducibles elements of $M_{a,b}$ eventually periodic?  
\end{question}

\begin{thm}\label{t:maincharacterization}
The irreducible elements of $M_{a,b}$ form an eventually periodic sequence if and only if $a | b$.  
\end{thm}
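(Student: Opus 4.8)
The plan is to reduce the statement to an explicit criterion for reducibility in $M_{a,b}$ and to show that the hypothesis $a \mid b$ is exactly what makes this criterion a congruence condition, hence periodic, while for $a \nmid b$ reducibility genuinely detects the multiplicative shape of an element and is therefore aperiodic. I take $a \ge 2$ (the regular case $a=1$ is degenerate). The one general fact I use at the outset is that every $x \in M_{a,b}$ satisfies $d \mid x$ for $d = \gcd(a,b)$, since $x \equiv a \equiv 0 \pmod d$; thus any factorization $n = xy$ with $x,y \in M_{a,b}$ forces $d^2 \mid n$.

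For the forward direction, suppose $a \mid b$ and write $b = ac$. From $a^2 \equiv a \pmod{b}$ we get $ac \mid a(a-1)$, so $c \mid a-1$, i.e. $a \equiv 1 \pmod c$ and $\gcd(a,c)=1$. I claim $n \in M_{a,b}$ is reducible if and only if $a^2 \mid n$. If $n = xy$ nontrivially then $a = \gcd(a,b) \mid x$ and $a \mid y$, so $a^2 \mid n$. Conversely, if $a^2 \mid n$ then $a \mid n/a$, and a short computation (using $a \equiv 1 \pmod c$ and the Chinese Remainder Theorem, since $\gcd(a,c)=1$) shows $n/a \equiv a \pmod{b}$; hence $n = a\cdot(n/a)$ is a product of two elements of $M_{a,b}$, both exceeding $1$ because $a \ge 2$. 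Since ``$a^2 \mid n$'' is a periodic predicate, the set of reducibles, and therefore the set of irreducibles, is (genuinely, hence eventually) periodic.

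For the converse I prove the contrapositive: if $a \nmid b$ the irreducibles are not eventually periodic. Here $d = \gcd(a,b) < a$, and one checks $a \equiv 1 \pmod{b/d}$. It suffices to produce, for each candidate period $P$ (necessarily a multiple of $b$), a single residue class modulo $P$ containing arbitrarily large irreducibles and arbitrarily large reducibles. The irreducibles come from $ap$ with $p$ a large prime satisfying $p \equiv 1 \pmod{b/d}$ (so $ap \in M_{a,b}$) and $p \not\equiv a \pmod b$: any factorization $ap = xy$ in $M_{a,b}$ would force $\{x,y\}=\{a,p\}$, since $p$ is a large prime and $a$ is the least element of $M_{a,b}$, and this is illegal because $p \notin M_{a,b}$. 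The reducibles come from elements of the same shape $aN$ in which $N$ is instead a product of two primes in a ``bad'' residue class, so that $aN$ splits nontrivially in $M_{a,b}$; the model $M_{4,6}$, in which $4N$ is reducible exactly when $N$ has a prime factor $\equiv 2 \pmod 3$, illustrates the mechanism. Dirichlet's theorem supplies infinitely many primes in each admissible progression, and the Chinese Remainder Theorem lets me place a prime-indexed irreducible and a product-of-two-bad-primes reducible into a common class modulo $P$.

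The main obstacle is this converse, and within it two points. First, one must formulate and prove the correct reducibility criterion for a general ACM with $a \nmid b$: the clean ``$a^2 \mid n$'' of the divisor case fails, and one needs the refined description, governed by the $q$-adic valuations of $a$, $b$, and $n$ at primes $q \mid b$, that isolates a sub-progression on which reducibility is equivalent to ``the cofactor has a prime factor in a prescribed bad residue class.'' Second, one must verify that the Dirichlet/CRT bookkeeping always succeeds, namely that the congruence constraints defining the irreducible family (a prime cofactor) and the reducible family (a product of two bad primes) are simultaneously compatible with a common class modulo $P$ and with the coprimality needed to invoke Dirichlet's theorem. Once the reducibility criterion is established this is a finite compatibility check, but carrying it out uniformly over all $a,b$ with $a \nmid b$ is where the real work lies.
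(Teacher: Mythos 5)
Your forward direction is correct and is essentially the paper's: for $n \equiv a \bmod b$ with $a \mid b$ and $a \ge 2$, your criterion ``$n$ is reducible iff $a^2 \mid n$'' is equivalent to the paper's ``$n$ is reducible iff $n \equiv a^2 \bmod ab$'' (Lemma~\ref{l:reducible} and Theorem~\ref{t:periodicacms}), and either form yields periodicity. One caveat: you cannot simply set aside $a = 1$ as degenerate, since $1 \mid b$ always holds while the irreducibles of $M_{1,b}$ are \emph{not} eventually periodic (this case is handled in Theorem~\ref{t:aperiodicacms}); the characterization being proved is really ``$a > 1$ and $a \mid b$.'' The substantive problem, however, is your converse: what you offer is a plan whose two essential steps --- the reducibility criterion for general $a \nmid b$, and the Dirichlet/CRT compatibility check --- are explicitly deferred (``where the real work lies''), so no proof is actually given.

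Moreover, the plan as structured cannot be completed, because your irreducible witnesses sit in the wrong congruence class. By your own observation, every reducible element is divisible by $d^2$, where $d = \gcd(a,b)$ (the paper's $g$; Lemma~\ref{l:reducible}(a) gives the sharper fact that every reducible is $\equiv a^2 \bmod bd$). But for a large prime $p$ we have $\gcd(p,d) = 1$, so $d^2 \mid ap$ if and only if $d^2 \mid a$. Hence whenever $d^2 \nmid a$, none of your elements $ap$ is divisible by $d^2$; a concrete case is $M_{6,10}$, where $d = 2$, every reducible is divisible by $4$, yet $6p \equiv 2 \bmod 4$. Since a hypothetical eventual period $P$ may always be replaced by $d^2P$, the residue class containing your witnesses $ap$ then contains \emph{no} reducible elements at all, and no CRT bookkeeping can place one there. (Your models $M_{4,6}$ and $M_{9,12}$ conceal this because $d^2 \mid a$ in both.) The two ideas the paper supplies, and which your argument is missing, are exactly these: (i) irreducibles \emph{inside} the class $a^2 \bmod bd$, namely $n = d^2 p$ with $p$ prime and $p \equiv (a/d)^2 \bmod (b/d)$ --- irreducible because every nonunit of $M_{a,b}$ is divisible by $d$, so the only conceivable splitting is $(d)(dp)$, and $d \notin M_{a,b}$ when $d < a$, with Dirichlet applicable since $\gcd\bigl((a/d)^2, b/d\bigr) = 1$ (Theorem~\ref{t:aperiodicacms}); and (ii) an explicit construction of arbitrarily long runs of consecutive reducibles in that same class (Proposition~\ref{p:longreduciblesequences}), which bypasses the general reducibility criterion you would otherwise have to formulate and prove. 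With these, every sufficiently long run meets every residue class of $a^2 \bmod bd$ modulo the candidate period, while pigeonhole places infinitely many of the irreducibles $d^2p$ in one such class, producing the mixed class your scheme needs for every period.
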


The proof of Theorem~\ref{t:maincharacterization} is split between Sections~\ref{s:periodic} and~\ref{s:aperiodic}, each of which include one direction of the proof as Theorems~\ref{t:periodicacms} and~\ref{t:aperiodicacms}, respectively.  Initial investigations into Question~\ref{q:periodicity}, as well as the formation of several proofs in this paper, made use of a new software package for working with arithmetical congruence monoids; see Remark~\ref{r:sagepackage}.

\section{Arithmetical congruence monoids}
\label{s:background}

\begin{defn}\label{d:acm}
An \emph{arithmetical congruence monoid} is a multiplicative submonoid of $(\ZZ_{\ge 1}, \cdot)$ of the form 
$$M_{a,b} = \{1\} \cup \{n \in \ZZ_{\ge 1} : n \equiv a \bmod b\}$$
for positive integers $a, b \in \ZZ_{\ge 1}$ satisfying $a^2 \equiv a \bmod b$.  An element $u \in M_{a,b}$ is \emph{irreducible} if it cannot be written as a product of two non-unit elements of $M_{a,b}$.  
A \emph{factorization} of a given element $n \in M_{a,b}$ is an expression of the form
$$n = u_1 \cdots u_r$$
for irredudible elements $u_1, \ldots, u_r \in M_{a,b}$.  
\end{defn}

\begin{remark}\label{r:acm}
The condition $a^2 \equiv a \bmod b$ in Definition~\ref{d:acm} simply ensures that $M_{a,b}$ is closed under multiplication.  We include the identity element $1 \in M_{a,b}$, as doing so does not affect the factorization structure, but allows many statements to be simplified.  
\end{remark}

\begin{example}\label{e:hilbert}
Consider the arithmetical congruence monoid $M_{1,4}$, known as the \emph{Hilbert monoid}.  Any prime integer $p$ satisfying $p \equiv 1 \bmod 4$ is irreducible in $M_{1,4}$, but these are not the only irreducible elements of $M_{1,4}$.  For instance, $9, 21, 49 \in M_{1,4}$ are all irreducible, since each is the product of two prime integers that lie outside of $M_{1,4}$.  As a result, some elements of $M_{1,4}$ have multiple distinct factorizations (in the sense of Definition~\ref{d:acm}).  For example, $441 \in M_{1,4}$ has two distinct factorizations: 
$$441 = 9 \cdot 49 = 21^2.$$
Non-unique factorizations occur in every arithmetical congruence monoid, with the exception of $M_{1,1}$ and $M_{1,2}$.  
\end{example}

\begin{remark}\label{r:sagepackage}
Proving results involving arithmetical congruence monoids often requires locating monoid elements with specific factorization properties (see, for instance, the proofs of \cite[Theorems~4.8 and~4.9]{acmsurvey}, or Proposition~\ref{p:longreduciblesequences} in this paper).  To aid in this process, the authors developed a Sage \cite{sage} package for computing factorizations in arithmetical congruence monoids.  This package is now publicly available under the MIT license, and can be downloaded from the following webpage: 
\begin{center}
\url{http://www.math.tamu.edu/~coneill/acms}
\end{center}
This package was used for initial investigations into Question~\ref{q:periodicity}, and to aid in locating sequences of elements necessary to prove Proposition~\ref{p:longreduciblesequences}.  It was also used to generate the plots included in Figures~\ref{f:periodic} and~\ref{f:aperiodic}.  It is the author's hope that others interested in studying the factorization properties of arithmetical congruence monoids will find this package useful as well.  
\end{remark}

We conclude this section by recalling the following elementary fact from number theory, which will be used in the proof of Theorem~\ref{t:aperiodicacms}.  

\begin{thm}[Dirichlet]\label{t:dirichlet}
If $a$ and $b$ are relatively prime positive integers, then there are infinitely many primes $p$ satisfying $p \equiv a \bmod b$.  
\end{thm}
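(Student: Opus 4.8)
The plan is to prove this by the classical analytic method of Dirichlet, built on the characters of the group $(\ZZ/b\ZZ)^\times$ and their associated $L$-functions. First I would introduce the \emph{Dirichlet characters} modulo $b$: the group homomorphisms $\chi \colon (\ZZ/b\ZZ)^\times \to \CC^\times$, extended to all of $\ZZ$ by declaring $\chi(n) = 0$ whenever $\gcd(n,b) > 1$. These are completely multiplicative, there are exactly $\varphi(b)$ of them (where $\varphi$ denotes Euler's totient), and they form a group under pointwise multiplication with identity the principal character $\chi_0$. To each $\chi$ I attach the \emph{Dirichlet $L$-function}
$$
L(s,\chi) = \sum_{n=1}^\infty \frac{\chi(n)}{n^s},
$$
which converges absolutely for $\Re(s) > 1$ and, by complete multiplicativity of $\chi$ together with unique factorization in $\ZZ$, admits the Euler product $L(s,\chi) = \prod_p (1 - \chi(p)/p^s)^{-1}$ over all primes $p$.

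Next I would take logarithms of the Euler product and expand, obtaining for $\Re(s) > 1$ the identity
$$
\log L(s,\chi) = \sum_p \frac{\chi(p)}{p^s} + R(s,\chi),
$$
where $R(s,\chi) = \sum_p \sum_{k \ge 2} \chi(p)^k/(k\,p^{ks})$ stays bounded as $s \to 1^+$ (it is dominated by $\sum_p p^{-2}(1-p^{-1})^{-1} < \infty$). The crucial algebraic input is the \emph{orthogonality relation}: since $\gcd(a,b) = 1$,
$$
\frac{1}{\varphi(b)} \sum_{\chi} \ol{\chi(a)}\, \chi(n)
= \begin{cases} 1 & \text{if } n \equiv a \bmod b,\\ 0 & \text{otherwise,}\end{cases}
$$
the sum taken over all $\varphi(b)$ characters. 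Multiplying the logarithmic identity by $\ol{\chi(a)}/\varphi(b)$ and summing over $\chi$ isolates the progression:
$$
\frac{1}{\varphi(b)} \sum_\chi \ol{\chi(a)} \log L(s,\chi)
= \sum_{p \equiv a \bmod b} \frac{1}{p^s} + (\text{bounded as } s \to 1^+).
$$
It therefore suffices to show the left-hand side is unbounded as $s \to 1^+$, since $\sum_{p \equiv a} p^{-s}$ can only diverge if infinitely many such primes exist.

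The behaviour of the left-hand side is governed by the individual factors $\log L(s,\chi)$ as $s \to 1^+$. For the principal character $\chi_0$, the function $L(s,\chi_0)$ differs from the Riemann zeta function $\zeta(s)$ only by the finitely many Euler factors at primes dividing $b$, so it inherits the simple pole of $\zeta$ at $s = 1$; hence $\log L(s,\chi_0) \to +\infty$. The contribution of each \emph{non-principal} character $\chi \ne \chi_0$ stays bounded provided $L(s,\chi)$ extends continuously to $s = 1$ with $L(1,\chi) \ne 0$; the continuous extension is standard, since the partial sums of $\chi(n)$ are bounded by orthogonality, so the defining series converges for $\Re(s) > 0$. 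Granting the nonvanishing $L(1,\chi) \ne 0$ for all $\chi \ne \chi_0$, the single unbounded term from $\chi_0$ survives the weighted average and forces the divergence.

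The hard part is precisely the nonvanishing assertion $L(1,\chi) \ne 0$ for non-principal $\chi$, and this is where I expect the main obstacle. I would split it into two cases. When $\chi$ is \emph{complex} (that is, $\chi \ne \ol\chi$), a short argument suffices: if $L(1,\chi) = 0$ then also $L(1,\ol\chi) = 0$, producing two zeros that would overwhelm the single pole of $L(s,\chi_0)$ in the product $\prod_\chi L(s,\chi)$, contradicting the fact that this product is a Dirichlet series with nonnegative coefficients and constant term $1$, hence bounded away from $0$ for real $s > 1$. The genuinely delicate case is a \emph{real} (quadratic) character $\chi$, where a single real zero cannot be ruled out so cheaply. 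Here I would apply the nonnegativity trick to $F(s) = \zeta(s) L(s,\chi)$, whose Dirichlet coefficients $a_n = \sum_{d \mid n} \chi(d)$ are nonnegative with $a_{n^2} \ge 1$. If $L(1,\chi) = 0$, the zero cancels the simple pole of $\zeta$ at $s = 1$, so $F$ is analytic on $\Re(s) > 0$; Landau's theorem on Dirichlet series with nonnegative coefficients would then force $\sum_n a_n n^{-s}$ to converge throughout $\Re(s) > 0$, contradicting the fact that at $s = \tfrac12$ the subseries over perfect squares already dominates $\sum_n 1/n$ and diverges. Thus $L(1,\chi) \ne 0$ in both cases, and assembling everything yields the theorem.
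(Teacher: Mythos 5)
The paper does not actually prove this statement: Theorem~\ref{t:dirichlet} is Dirichlet's classical theorem on primes in arithmetic progressions, which the authors simply recall as a known fact from number theory (it is used only as an ingredient in the proof of Theorem~\ref{t:aperiodicacms}), so there is no in-paper argument to compare yours against. Your proposal is a correct outline of the standard analytic proof, and all the essential points are present and handled properly: orthogonality of the $\varphi(b)$ characters modulo $b$ isolates $\sum_{p \equiv a \bmod b} p^{-s}$; the simple pole of $L(s,\chi_0)$ at $s=1$, inherited from $\zeta(s)$, supplies the divergence; convergence of $L(s,\chi)$ on $\Re(s)>0$ for $\chi \ne \chi_0$ follows from bounded partial sums and partial summation; and the crucial nonvanishing $L(1,\chi) \ne 0$ is split, as it must be, into the easy complex case (conjugate zeros of $\chi$ and $\ol\chi$ overwhelming the single pole of $\chi_0$ in $\prod_\chi L(s,\chi)$, which is at least $1$ for real $s>1$) and the delicate real case, which you settle correctly via Landau's theorem applied to $F(s) = \zeta(s)L(s,\chi)$, whose coefficients $\sum_{d \mid n} \chi(d)$ are nonnegative and at least $1$ on perfect squares, so convergence at $s = \tfrac12$ would be contradicted. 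The only caveats are those inherent in a sketch: you should claim holomorphy, not merely continuity, of $L(s,\chi)$ on $\Re(s)>0$ (analyticity is needed both for the order-of-vanishing bookkeeping and to invoke Landau's theorem); you quietly use the meromorphic continuation of $\zeta$ to $\Re(s)>0$ with its only pole at $s=1$; and the nonnegativity of the coefficients of $\prod_\chi L(s,\chi)$ deserves the one-line justification that its logarithm is a Dirichlet series with nonnegative coefficients supported on prime powers $p^k \equiv 1 \bmod b$. None of these is a substantive gap; given that the paper treats the theorem as a black box, supplying the classical proof in full is a reasonable and faithful completion.
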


\section{The periodic case}
\label{s:periodic}

The main result of this section is Theorem~\ref{t:periodicacms}, which proves the direction of Theorem~\ref{t:maincharacterization} concerning arithmetical congruence monoids whose irreducible elements form an eventually periodic sequence.  Most of the argument is contained in Lemma~\ref{l:reducible}, which gives two modular conditions (one necessary and one sufficient) for reducibility in any arithmetical congruence monoid.  Lemma~\ref{l:reducible} will also be used in the proof of Theorem~\ref{t:aperiodicacms}, which proves the other direction of Theorem~\ref{t:maincharacterization}.  

Before stating these results, we give an example.

\begin{example}\label{e:periodic}
Consider the arithmetical congruence monoid $M_{7,42}$, whose irreducible and reducible elements are depicted in Figure~\ref{f:periodic}.  As is readily visible from the plot, exactly one in every 7 elements is reducible, namely the following elements: 
$$49, 343, 637, 931, 1225, \ldots$$
In particular, the reducible elements of $M_{7,42}$ form an arithmetic sequence with step size $7 \cdot 42 = 294$, which is the period guaranteed by Theorem~\ref{t:periodicacms}.  Moreover, every reducible element is congruent to $7^2 = 49$ modulo $294$, as guaranteed by Lemma~\ref{l:reducible}.  
\end{example}

\begin{figure}
\begin{center}
\includegraphics[width=6.0in]{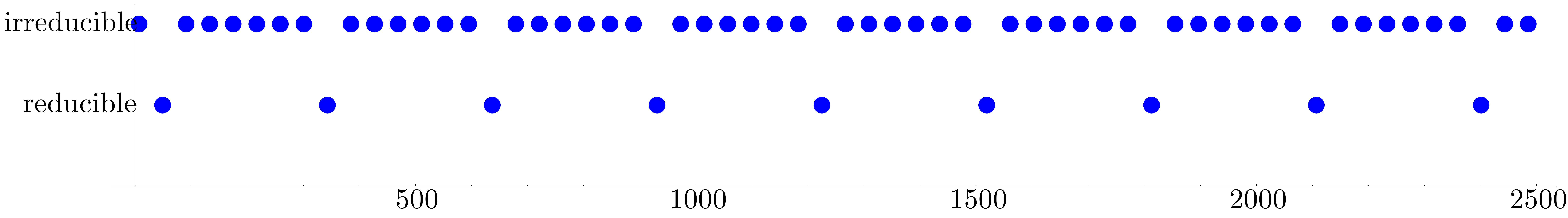}
\end{center}
\caption{A plot depicting the irreducible elements of $M_{7,42}$ from Example~\ref{e:periodic}.}
\label{f:periodic}
\end{figure}

\begin{lemma}\label{l:reducible} 
Fix an arithmetical congruence monoid $M_{a,b}$, and let $g = \gcd{(a,b)}$.  
\begin{enumerate}[(a)]
\item 
Any reducible element $n \in M_{a,b}$ satisfies $n \equiv a^2 \bmod bg$.  

\item 
If $a > 1$, then any monoid element $n \equiv a^2 \bmod ab$ is reducible.  
\end{enumerate}
\end{lemma}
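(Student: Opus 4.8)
The plan is to treat the two parts separately, both by direct computation with the defining congruence $n \equiv a \bmod b$. For part (a), I would begin with a factorization $n = uv$ into two non-unit elements of $M_{a,b}$. Since the non-units are exactly the elements different from $1$, both factors satisfy $u \equiv v \equiv a \bmod b$, so I may write $u = a + bs$ and $v = a + bt$ for integers $s,t$. Expanding the product gives
$$n = uv = a^2 + ab(s+t) + b^2 st = a^2 + b\bigl(a(s+t) + b\,st\bigr).$$
It then suffices to show that $g$ divides $a(s+t) + b\,st$, which is immediate since $g = \gcd(a,b)$ divides both $a$ and $b$ and hence divides each summand. This yields $bg \mid n - a^2$, that is, $n \equiv a^2 \bmod bg$, as claimed.

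For part (b), given a monoid element $n \equiv a^2 \bmod ab$, I would exhibit an explicit nontrivial factorization rather than argue by contradiction. Since $ab \mid n - a^2$, in particular $a \mid n$, so writing $n = a^2 + abk$ produces the factorization $n = a\,(a + bk)$. The factor $a$ lies in $M_{a,b}$ (trivially $a \equiv a \bmod b$) and is a non-unit because $a > 1$; the cofactor $a + bk$ likewise satisfies $a + bk \equiv a \bmod b$, so it too lies in $M_{a,b}$. Once both factors are confirmed to be non-units, $n$ is a product of two non-units and is therefore reducible.

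The only delicate point, and the step I expect to require the most care, is verifying that the cofactor $a + bk$ is a genuine non-unit, i.e. that $k \ge 0$ and $a + bk > 1$. Here I would invoke the standard normalization $1 \le a \le b$ under which every arithmetical congruence monoid can be presented: then $a^2 \le ab$, so the least positive element of the residue class of $a^2$ modulo $ab$ is $a^2$ itself, forcing $k \ge 0$ and hence $a + bk \ge a > 1$. This small-case bookkeeping — also ruling out the degenerate possibilities $n = 1$ or $a + bk = 1$ — is the main (if minor) obstacle, since the algebraic heart of both parts reduces to the single expansion of $(a+bs)(a+bt)$ together with the observation that $\gcd(a,b)$ divides each of $a$ and $b$.
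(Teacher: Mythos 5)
Your proof is correct and takes essentially the same route as the paper's: part (a) is the same expansion of $(a+bs)(a+bt)$ followed by the observation that $g$ divides $a(s+t)+bst$, and part (b) uses the identical factorization $n = a(a+bk)$. The one genuine difference is that you explicitly verify the cofactor $a+bk$ is a non-unit via the normalization $1 \le a \le b$ --- a point the paper's proof silently skips, and one that actually matters: the paper's definition never imposes $a \le b$, and without that normalization part (b) is false as written (in $M_{5,4}$ the element $n = 5$ satisfies $n \equiv a^2 \bmod ab$ yet is irreducible), so your ``small-case bookkeeping'' is precisely where the standard normalization from the ACM literature gets used.
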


\begin{proof}
For $q_1, q_2 \in \ZZ_{\ge 0}$, multiplying the elements $a + q_1b, a + q_2b \in M_{a,b}$ yields
$$\begin{array}{rcl}
(a + bq_1)(a + bq_2)
&=& a^2 + ab(q_1 + q_2) + b^2q_1q_2 \\
&=& a^2 + bg(\frac{a}{g}(q_1 + q_2) + \frac{b}{g}q_1q_2)
\end{array}$$
which is congruent to $a^2$ modulo $bg$.  This proves part~(a).  For part~(b), suppose $a > 1$ and fix $n = a^2 + abq \in M_{a,b}$.  Since $a \ne 1$, writing
$$n = a^2 + abq = a(a + bq).$$
expresses $n$ as a product of nonunits, thus proving $n$ is reducible.  
\end{proof}

\begin{thm}\label{t:periodicacms}
Fix an arithmetical congruence monoid $M_{a,b}$ satisfying $a > 1$ and $a | b$.  An element $n \in M_{a,b}$ is reducible if and only if $n \equiv a^2 \bmod ab$.  
\end{thm}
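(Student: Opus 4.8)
The plan is to obtain the theorem as an immediate consequence of the two halves of Lemma~\ref{l:reducible}, once a single arithmetic observation is made. The key point is that the hypothesis $a \mid b$ forces the greatest common divisor $g = \gcd(a,b)$ to equal $a$: indeed, $a$ divides both $a$ and $b$, and no divisor of $a$ exceeds $a$ itself. Substituting $g = a$ into the two modular conditions of the lemma causes both to become the single congruence $n \equiv a^2 \bmod ab$, and this is precisely what makes the necessary condition of part~(a) and the sufficient condition of part~(b) coincide.

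Concretely, for the forward implication I would assume $n \in M_{a,b}$ is reducible and apply Lemma~\ref{l:reducible}(a), which gives $n \equiv a^2 \bmod bg$; replacing $bg$ by $ba = ab$ yields $n \equiv a^2 \bmod ab$ as required. For the reverse implication I would assume $n \equiv a^2 \bmod ab$ and invoke Lemma~\ref{l:reducible}(b), which applies because the hypothesis $a > 1$ is in force, to conclude that $n$ is reducible. Combining the two implications gives the stated equivalence.

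I expect no genuine obstacle in this argument; the content is entirely carried by Lemma~\ref{l:reducible}, and the proof merely records that under $a \mid b$ its two one-sided conditions close the gap and meet at $n \equiv a^2 \bmod ab$. The one point worth emphasizing is the indispensability of the hypothesis $a > 1$, which is exactly what part~(b) of the lemma demands; it is this hypothesis that upgrades the necessary condition of part~(a) into a full characterization of reducibility.
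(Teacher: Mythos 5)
Your proposal is correct and matches the paper's own proof, which likewise observes that $a \mid b$ forces $\gcd(a,b) = a$ and then cites both parts of Lemma~\ref{l:reducible} for the two directions. You have merely spelled out the substitution $bg = ab$ and the role of the hypothesis $a > 1$ in slightly more detail than the paper does.
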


\begin{proof}
Both directions follow from Lemma~\ref{l:reducible} since $\gcd(a,b) = a$ in this case.  
\end{proof}

\section{The aperiodic case}
\label{s:aperiodic}

In this section, we complete the proof of Theorem~\ref{t:maincharacterization} by showing that the set of irreducible elements of any arithmetical congruence monoid not covered by Theorem~\ref{t:periodicacms} is not eventually periodic (Theorem~\ref{t:aperiodicacms}).  The main idea of the proof is given in Proposition~\ref{p:longreduciblesequences}, which produces arbitrarily long sequences of reducible elements in any such arithmetical congruence monoid.  

\begin{example}\label{e:aperiodic}
Depicted in Figure~\ref{f:aperiodic} are the irreducible elements of the arithmetical congruence monoid $M_{9,12}$.  The large red dots indicate elements of the set 
$$A = \{n \in M_{9,12} : n \equiv 9^2 \bmod 36\}$$
defined in Proposition~\ref{p:longreduciblesequences}.  Notice that every element of $M_{9,12}$ that lies outside of $A$ (represented by a small blue dot) is irreducible, as predicted by Lemma~\ref{l:reducible}(b).  

Proposition~\ref{p:longreduciblesequences} provides the first step in the proof of Theorem~\ref{t:aperiodicacms} by locating arbitrarily long sequences of reducible elements in the set $A$.  For $M_{9,12}$, the following sequence of $k = 4$ consecutive elements of $A$ is identified.  
$$\begin{array}{r@{}c@{}l@{\qquad}r@{}c@{}l}
31995873 &{}={}& 21 \cdot 1523613
&
31995909 &{}={}& 33 \cdot 969573
\\
31995945 &{}={}& 45 \cdot 711021
&
31995981 &{}={}& 57 \cdot 561333
\end{array}$$
The remainder of the proof of Theorem~\ref{t:aperiodicacms} demonstrates that, under certain conditions, the set $A$ also contains infinitely many irreducible elements.  
\end{example}

\begin{figure}
\begin{center}
\includegraphics[width=6.0in]{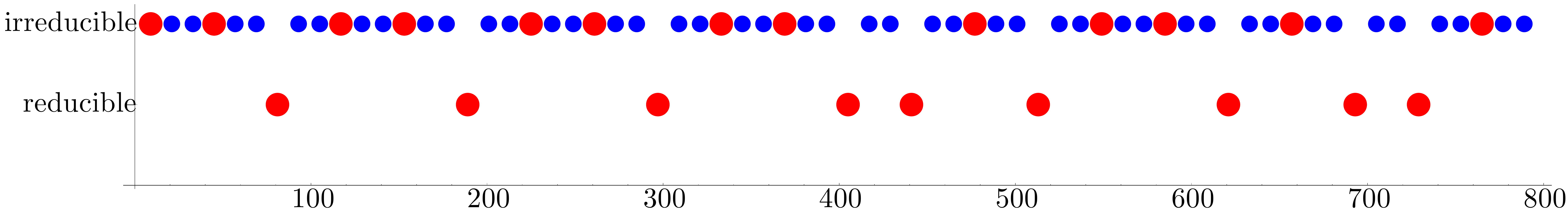}
\end{center}
\caption{A plot depicting the irreducible elements of $M_{9,12}$ from Example~\ref{e:aperiodic}.  
}
\label{f:aperiodic}
\end{figure}

\begin{prop}\label{p:longreduciblesequences}
Fix an arithmetical congruence monoid $M_{a,b}$, and let $g = \gcd(a,b)$.  The set 
$$A = \{n \in M_{a,b} : n \equiv a^2 \bmod bg\} \subset M_{a,b}$$
contains arbitrarily long sequences of consecutive reducible elements.  
\end{prop}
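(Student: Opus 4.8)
The plan is to reduce reducibility to a multiplicative statement modulo $b/g$, and then combine Dirichlet's theorem with the Chinese Remainder Theorem to force an entire block of consecutive elements of $A$ to be reducible. Write $a' = a/g$ and $b' = b/g$, so that $\gcd(a',b') = 1$ and in particular $a'$ is a unit modulo $b'$. Each element of $A$ has the form $\alpha_j = a^2 + bg\cdot j = g^2\beta_j$, where $\beta_j = a'^2 + b'j \equiv a'^2 \bmod b'$, and consecutive elements of $A$ correspond to consecutive integers $j$. The crucial observation is the following sufficient condition for reducibility: \emph{if some prime $p \equiv a' \bmod b'$ divides $\beta_j$, then $\alpha_j$ is reducible.} Indeed, writing $\beta_j = pc$ gives $a'c \equiv pc = \beta_j \equiv a'^2 \bmod b'$, and cancelling the unit $a'$ yields $c \equiv a' \bmod b'$. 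Then $gp \equiv gc \equiv ga' = a \bmod b$, so $gp$ and $gc$ both lie in $M_{a,b}$, and $\alpha_j = g^2\beta_j = (gp)(gc)$ exhibits $\alpha_j$ as a product of two nonunits (provided $gc > 1$).

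With this in hand I would fix $k$ and proceed as follows. First, invoke Dirichlet's theorem (Theorem~\ref{t:dirichlet}), valid since $\gcd(a',b') = 1$, to choose $k$ distinct primes $p_0, \dots, p_{k-1}$ each congruent to $a'$ modulo $b'$. Note that no $p_i$ divides $b'$: if $p_i \mid b'$, then $p_i \equiv a' \bmod b'$ would force $p_i \mid a'$, contradicting $\gcd(a',b')=1$. Hence $b'$ is invertible modulo each $p_i$, so for each $i$ the condition $\beta_{m+i} \equiv 0 \bmod p_i$ is a linear congruence $b'm \equiv -a'^2 - b'i \bmod p_i$ in the unknown $m$, with a unique solution modulo $p_i$. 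Since the $p_i$ are distinct primes, the moduli are pairwise coprime, and the Chinese Remainder Theorem produces an integer $m$ solving all $k$ congruences simultaneously; I may take $m$ arbitrarily large.

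For such an $m$, each $\beta_{m+i}$ is divisible by $p_i$, so by the observation in the first paragraph every $\alpha_{m+i}$ is reducible, where taking $m$ large guarantees that each cofactor $gc_i = \alpha_{m+i}/(gp_i)$ exceeds $1$ and is therefore a genuine nonunit. Consequently $\alpha_m, \alpha_{m+1}, \dots, \alpha_{m+k-1}$ is a run of $k$ consecutive elements of $A$, all reducible, which establishes the proposition.

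I expect the main obstacle to be the key residue computation: verifying that divisibility of $\beta_j$ by a prime $p \equiv a' \bmod b'$ automatically drives the cofactor into the same residue class $a' \bmod b'$. This is precisely what makes a single divisibility condition per element suffice (rather than a system of two congruences), and it relies essentially on $a'$ being a unit modulo $b'$, i.e. on $\gcd(a',b')=1$. The remaining ingredients — pairwise coprimality of the CRT moduli, secured by choosing distinct primes, and the nonunit condition on the cofactors, secured by taking $m$ large — are routine.
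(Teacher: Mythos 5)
Your proposal is correct, but it follows a genuinely different route from the paper's proof. The paper gives a fully explicit construction in the spirit of the classical fact that $n!+2,\ldots,n!+n$ are consecutive composites: it sets $n_j = bgj + ga + (a+b-g)\prod_{i=1}^k(bi+a)$ for $j=1,\ldots,k$, checks $n_j \in A$ using only the defining congruence $a^2 \equiv a \bmod b$, and exhibits the factorization
$$n_j = (bj+a)\Bigl(g+(a+b-g)\textstyle\prod_{i\neq j}(bi+a)\Bigr),$$
where the second factor is shown to lie in $M_{a,b}$ via the auxiliary fact that $b$ divides $g(a-1)$. Your argument instead divides out $g^2$, writing elements of $A$ as $g^2\beta_j$ with $\beta_j \equiv (a/g)^2 \bmod b/g$, proves the reducibility criterion that a prime divisor $p \equiv a/g \bmod b/g$ of $\beta_j$ forces the cofactor into the same residue class --- the cancellation of the unit $a/g$ modulo $b/g$ is sound precisely because $\gcd(a/g,\,b/g)=1$ --- and then combines Theorem~\ref{t:dirichlet} with the Chinese Remainder Theorem to align $k$ distinct such primes with $k$ consecutive indices. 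Your side conditions all check out: no $p_i$ can divide $b/g$ (else $p_i$ would divide $\gcd(a/g,b/g)$), so each congruence is solvable in $m$; distinctness of the primes makes the CRT moduli pairwise coprime; and taking $m$ large forces every cofactor to exceed $1$, so both factors are genuine nonunits. The trade-off between the two approaches: the paper's construction is entirely elementary and explicit, names the reducible elements outright, generalizes readily (Remark~\ref{r:longreduciblesequences}), and reserves Dirichlet's theorem for the one place it is unavoidable, namely producing infinitely many irreducibles in $A$ in the proof of Theorem~\ref{t:aperiodicacms}; your proof is less explicit and invokes Dirichlet a second time, but it is more conceptual, isolating a reusable sufficient condition for reducibility in $A$ (any element whose quotient by $g^2$ has a prime factor congruent to $a/g$ modulo $b/g$ is reducible) rather than relying on a formula that must be verified after the fact.
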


\begin{proof}
Fix $k \ge 1$, and consider the sequence $n_1, \ldots, n_k$ given by
$$n_j = bgj + ga + (a + b - g)\prod_{i=1}^k (bi + a)$$
for $j = 1, \ldots, k$.  We first verify that $n_j \in A$ for each $j \le k$.  Since $a^2 \equiv a \bmod b$, we have $a^k = a + bq$ for some $q > 0$.  This implies
$$\begin{array}{rcl}
n_j
&\equiv& ga + (a + b - g)a^k \bmod bg \\
&\equiv& a^{k+1} + g(a - a^k) \bmod bg \\
&\equiv& a(a + bq) \bmod bg \\
&\equiv& a^2 \bmod bg,
\end{array}$$
meaning $n_j \in A$.  Now, to prove that each $n_j$ is reducible in $M_{a,b}$, we express $n_j$ as
$$n_j = (bj + a)\left(g + (a + b - g)\textstyle\prod_{i \ne j} (bi + a)\right).$$
Clearly the first factor lies in $M_{a,b}$.  Moreover, $a(a - 1)$ divides $b$ and $\gcd(\frac{a}{g},b) = 1$, so $g(a-1)$ must also divide $b$.  This means the second factor satisfies 
$$\begin{array}{rcl}
g + (a + b - g)\prod_{i \ne j} (bi + a)
&\equiv& g + (a - g)a \bmod b \\
&\equiv& a^2 + g(1 - a) \bmod b \\
&\equiv& a \bmod b
\end{array}$$
and thus also lies in $M_{a,b}$.  This completes the proof.  
\end{proof}

\begin{remark}\label{r:longreduciblesequences}
Resume notation from the proof of Proposition~\ref{p:longreduciblesequences} above.  The given sequence of reducible elements can be easily generalized.  In particular, for each $c \ge 0$, the elements given by
$$bgj + g(bc + a) + (a + b - g)\prod_{i=1}^k (bi + bc + a)$$
for $j = 1, \ldots, k$ also form a sequence of consecutive reducible elements in $A$.  The proof of this fact is analogous to the proof of Proposition~\ref{p:longreduciblesequences} given above, where the given sequence coincides with the special case $c = 0$.  
\end{remark}

\begin{remark}\label{r:longreduciblesequenceassumptions}
Proposition~\ref{p:longreduciblesequences} makes no assumptions on the arithmetical congruence monoid $M_{a,b}$.  In particular, if $a > 1$ and $a | b$ as in Theorem~\ref{t:periodicacms}, then every element of $A$ is reducible by Lemma~\ref{l:reducible}.  In all other cases, however, one can find infinitely many irreducible elements in the set $A$.  This is the content of Theorem~\ref{t:aperiodicacms} below.  
\end{remark}

\begin{thm}\label{t:aperiodicacms}
Fix an arithmetical congruence monoid $M_{a,b}$, and let $g = \gcd(a,b)$.  If either $a = 1$ or $g < a$, then the irredudible elements of $M_{a,b}$ do not form an eventually periodic sequence.  
\end{thm}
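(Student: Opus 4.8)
The plan is to argue by contradiction, playing the arbitrarily long runs of reducible elements from Proposition~\ref{p:longreduciblesequences} against an independent supply of \emph{irreducible} elements inside the same set $A$. So suppose the characteristic sequence of the irreducibles of $M_{a,b}$ is eventually periodic, say with period $P_0$ beyond a threshold $N$. Since any positive multiple of a period is again a period, I may replace $P_0$ by $P = bg \cdot P_0$, so that $bg \mid P$. Now $A$ is the arithmetic progression $n \equiv a^2 \bmod bg$, and it lies inside $M_{a,b}$ because $b \mid bg$ and $a^2 \equiv a \bmod b$. As $bg \mid P$, the reducibility pattern restricted to $A$ is periodic beyond $N$ and assumes at most $P/(bg)$ distinct values, one for each residue modulo $P$ represented in $A$. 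The goal of the first half is to show that this forces all but finitely many elements of $A$ to be reducible.

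To see this, recall that consecutive elements of $A$ differ by $bg$, so any block of $P/(bg)+1$ consecutive reducible elements of $A$ spans a full period $P$ and therefore meets every residue class modulo $P$ that occurs in $A$. If such a block occurs past $N$, eventual periodicity propagates reducibility to \emph{every} element of $A$ beyond $N$. Proposition~\ref{p:longreduciblesequences} furnishes reducible runs of any prescribed length, and the generalization in Remark~\ref{r:longreduciblesequences} (letting the shift parameter $c$ grow) slides such a run arbitrarily far out; choosing run-length $P/(bg)+1$ and $c$ large enough to clear $N$ finishes this step. Thus, under the periodicity assumption, $A$ would contain only finitely many irreducible elements.

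The second, and decisive, half contradicts this by exhibiting infinitely many irreducibles in $A$ via Dirichlet's theorem (Theorem~\ref{t:dirichlet}); this is exactly where the hypothesis ``$a = 1$ or $g < a$'' is used. Write $a = ga'$ and $b = gb'$, so $\gcd(a',b') = 1$. If $g = 1$ (which subsumes $a = 1$, as well as the case $g < a$ with $\gcd(a,b) = 1$, where the defining congruence forces $a \equiv 1 \bmod b$ and hence $M_{a,b} = M_{1,b}$), then $A = \{n \equiv 1 \bmod b\}$, and the primes $p \equiv 1 \bmod b$ are infinite in number, automatically irreducible, and contained in $A$. If instead $g > 1$ and $g < a$, I consider $n = g^2 p$ for primes $p \equiv a'^2 \bmod b'$; since $\gcd(a'^2, b') = 1$, Dirichlet supplies infinitely many such $p$, and $g^2(p - a'^2)$ being divisible by $g^2 b' = bg$ shows $n \equiv a^2 \bmod bg$, so $n \in A$.

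The crux, which I expect to be the main obstacle, is verifying that each $n = g^2 p$ (with $p$ coprime to $g$, excluding the finitely many primes dividing $g$) is genuinely irreducible in $M_{a,b}$. The key structural observation is that every nonunit element of $M_{a,b}$ is divisible by $g$. Hence in any factorization $n = uv$ with $u,v \in M_{a,b}\setminus\{1\}$ one can write $u = gu_1$ and $v = gv_1$, giving $u_1 v_1 = p$; since $p$ is prime this forces one factor to equal $g$, and $g \notin M_{a,b}$ because $1 < g < a \le b$ makes $g \not\equiv a \bmod b$. This contradiction shows $n$ is irreducible, so $A$ contains infinitely many irreducibles, contradicting the first half and completing the proof. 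The points requiring the most care are this exhaustive check that no factorization of $g^2 p$ survives inside $M_{a,b}$ and the verification that $n$ lands in $A$; the run-placement in the first half is routine but must be done with enough care to guarantee the long reducible run truly occurs beyond the periodicity threshold.
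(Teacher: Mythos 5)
Your proof is correct and takes essentially the same approach as the paper: arbitrarily long runs of consecutive reducible elements of $A$ from Proposition~\ref{p:longreduciblesequences}, combined with infinitely many irreducibles in $A$ of the form $g^2p$ obtained by applying Theorem~\ref{t:dirichlet} to primes $p \equiv (\frac{a}{g})^2 \bmod \frac{b}{g}$, with irreducibility of $g^2p$ forced by the fact that every nonunit of $M_{a,b}$ is divisible by $g$ while $g \notin M_{a,b}$. The only difference is that you make explicit the periodicity bookkeeping (replacing the period $P_0$ by $bgP_0$, the residue-class block argument, and sliding the reducible run past the threshold via Remark~\ref{r:longreduciblesequences}) that the paper compresses into its ``it suffices to show'' step.
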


\begin{proof}
Let $A$ denote the set defined in Proposition~\ref{p:longreduciblesequences}.  By Lemma~\ref{l:reducible}(a), every element of $M_{a,b} \setminus A$ is irreducible, and by Proposition~\ref{p:longreduciblesequences}, the set $A$ contains arbitrarily long sequences of reducible elements of $M_{a,b}$.  To complete the proof, it suffices to show that $A$ also contains infinitely many irreducible elements of $M_{a,b}$.  

Fix a prime integer of the form $p = (\frac{a}{g})^2 + \frac{b}{g}q$ for $q \in \ZZ_{\ge 0}$.  If $a = 1$, then $p = 1 + bq$ lies in $A$ and is clearly irredudible in $M_{a,b}$.  Otherwise, consider the monoid element 
$$n = g^2p = a^2 + bgq \in M_{a,b}.$$
The above expression implies $n \in A$, and since every element of $M_{a,b}$ is divisible by $g$, the only possible factorization of $n$ in $M_{a,b}$ is $n = (g)(gp)$, the first factor of which lies outside of $M_{a,b}$ by assumption.  As such, $n$ is irreducible in $M_{a,b}$.  

We now complete the proof by applying Theorem~\ref{t:dirichlet}, which ensures there are infinitely many primes $p$ satisfying $p \equiv (\frac{a}{g})^2 \bmod \frac{b}{g}$ since $\gcd((\frac{a}{g})^2, \frac{b}{g}) = 1$.  
\end{proof}

Together, Theorems~\ref{t:periodicacms} and~\ref{t:aperiodicacms} provide a complete proof of Theorem~\ref{t:maincharacterization}.  

\begin{proof}[Proof of Theorem~\ref{t:maincharacterization}]
Apply Theorems~\ref{t:periodicacms} and~\ref{t:aperiodicacms}.  
\end{proof}




\end{document}